\renewcommand{\algocf@captiontext}[2]{#1\algocf@typo. \AlCapFnt{}#2} 
\def\@algocf@capt@plain{top}
\renewcommand{\algocf@makecaption}[2]{%
  \addtolength{\hsize}{\algomargin}%
  \sbox\@tempboxa{\algocf@captiontext{#1}{#2}}%
  \ifdim\wd\@tempboxa >\hsize
    \hskip .5\algomargin%
    \parbox[t]{\hsize}{\algocf@captiontext{#1}{#2}}
  \else%
    \global\@minipagefalse%
    \hbox to\hsize{\box\@tempboxa}
  \fi%
  \addtolength{\hsize}{-\algomargin}%
}
\def\Bka{{\it Biometrika}}
\begin{document}

\newcommand{\red}{\color{red}}
\newcommand{\blue}{\color{blue}}
\newcommand{\mage}{\color{magenta}}

\def\logit{\text{\rm logit }}
\def\expit{\text{\rm expit }}
\newcommand\independent{\protect\mathpalette{\protect\independenT}{\perp}}
\def\independenT#1#2{\mathrel{\rlap{$#1#2$}\mkern2mu{#1#2}}}
\makeatletter
\newcommand*{\indep}{%
\mathbin{%
\mathpalette{\@indep}{}%
}%
}
\newcommand*{\nindep}{%
\mathbin{
\mathpalette{\@indep}{\not}
}%
}
\newcommand*{\@indep}[2]{%
\sbox0{$#1\perp\m@th$}
\sbox2{$#1=$}
\sbox4{$#1\vcenter{}$}
\rlap{\copy0}
\dimen@=\dimexpr\ht2-\ht4-.2pt\relax
\kern\dimen@
{#2}%
\kern\dimen@
\copy0 
} 
\makeatother

\jname{Biometrika}
\jyear{2015}
\jvol{}
\jnum{}
\copyrightinfo{\Copyright\ 2012 Biometrika Trust\goodbreak {\em Printed in Great Britain}}
%

\markboth{Wang Miao \and Eric Tchetgen Tchetgen}{Miscellanea}
%
%
%
%

\title{On Varieties of Doubly Robust Estimators Under Missingness Not at Random With a Shadow Variable}

\author{Wang Miao}
\affil{Beijing International Center for Mathematical Research, Peking University,\\ Beijing 100871, P.R.C. \email{mwfy@pku.edu.cn}}
\date{\today}

\author{\and Eric Tchetgen Tchetgen}
\affil{Department of Biostatistics, Harvard University, Boston,  Massachusetts 02115, U.S.A. \email{etchetge@hsph.harvard.edu}}

\maketitle

\begin{abstract}
Suppose we are interested in the mean of an outcome variable missing not at random. Suppose however that one has available a fully observed shadow variable, which is associated with the outcome but independent of the missingness process conditional on covariates and the possibly unobserved outcome. Such a  variable may be a proxy or a mismeasured version of the outcome available for all individuals.
We have previously established necessary and sufficient conditions for identification of the full data law in such a setting, and have described  semiparametric estimators including a doubly robust estimator of the outcome mean. Here, we propose  two alternative   doubly robust estimators  for the outcome mean, which may be viewed as extensions of analogous methods under missingness at random, but  enjoy different properties.  We  assess correctness of the required working models via straightforward goodness-of-fit tests.

\end{abstract}

\begin{keywords}
Doubly robust estimation; Missingness not at random; Shadow variable.
\end{keywords}

\section{Introduction}
Doubly robust methods are  designed to mitigate estimation bias due to model misspecification in  observational studies and imperfect experiments. Such methods have grown in popularity in recent years for estimation with missing data and other forms of coarsening \citep{robins1994estimation,scharfstein1999adjusting,van2003unified,bang2005doubly,tsiatis2007semiparametric}. 
There  exist  various constructions  of doubly robust estimators for the mean of  an outcome that is missing at random; see  \cite{kang2007demystifying}.
In contrast, for data missing not at random,    difficulty of identification   undermines one's ability to obtain accurate inferences, and doubly robust estimation is  far more challenging.
Identification of a full data model means that, the parameters indexing the model are uniquely determined by the observed data,  i.e., the data that are actually observed on the individuals. 
Statistical inference based on non-identifiable models may be misleading and of limited interest in practice; see \cite{miao2014normal}. 
Under missingness at random,  the full data law, i.e., the joint distribution of  all variables of interest, is nonparametrically identified from the observed data. However, under missingness not at random, identification  is no longer  possible  without further restrictions on the missingness process.  
Although no general identification results are available for data missing not at random,   one may  identify the full data law  under specific  assumptions.
Building on earlier work by  \cite{d2010new}, \cite{wang2014instrumental} and \cite{zhao2014semiparametric}, \cite{2015arXiv150902556M}   used a  fully observed shadow variable  to establish a general identification framework for data missing not at random.
Such a variable is   associated with the outcome conditional on covariates, but independent of the missingness  conditional on   covariates and the outcome \citep{kott2014}; it may be available in many empirical studies, where a fully observed proxy or a mismeasured version of the outcome is available.
For example, in a study of mental health of children in Connecticut \citep{zahner1992children,ibrahim2001using}, researchers were interested in evaluating the prevalence of students with abnormal psychopathological status based on their teacher's assessment, which was subject to missingness.  A separate parent report available for all children in the study, is a proxy for   the teacher's assessment, but is unlikely to be related to the teacher's response rate conditional on covariates and her assessment of the student; in this case the parental assessment constitutes a valid shadow variable.  Other examples can be found in \cite{2015arXiv150902556M} and \cite{wang2014instrumental}. 

Throughout, we let $Y$ denote the outcome, $R$ is its missingness indicator with $R=1$ if $Y$ is observed, otherwise $R=0$, and let $X$ denote fully observed covariates.  Suppose that one has also fully observed a shadow variable $Z$ that satisfies 
\begin{assumption}\label{assump:anci}
(i)$Z$ $\nindep$ $Y$ $\mid$ $X$; (ii)  $Z$ $\indep$ $R$ $\mid$ $(Y,X)$.
\end{assumption}
Assumption \ref{assump:anci} formalizes  the idea that,  the shadow  variable only affects the missingness through its association with the outcome.
We provide  a directed acyclic graph  in the Supplementary Material that can help to understand the assumption.
The shadow variable introduces additional conditional independence conditions, which impose further restrictions on the missingness  process, and thus  
provides better opportunity for identification despite the fact that data may be missing not at random.
\cite{2015arXiv150902556M} presented a brief review of such problems, and gave necessary and sufficient conditions as well as    sufficient conditions for identification with a shadow variable.
In particular,  if the outcome is binary, the full data law  is  identifiable with  a binary shadow variable. But for a continuous outcome, a binary shadow variable does not impose enough restrictions to identify the full data law; see the Supplement Material for a counterexample. Identification  for a continuous outcome  requires at least one continuous shadow variable,  but even then, additional conditions are needed. 
We consider a  location-scale model for the density function: 
\begin{eqnarray}\label{mdl:1}
f(y\mid x,z,r) = \frac{1}{\sigma_r(z,x)}f_{r}\left\{\frac{y-\mu_r(z,x)}{\sigma_r(z,x)}\right\},\quad  r=0,1,
\end{eqnarray}
with  unrestricted functions $\mu_r$ and $\sigma_r$, and   density functions $f_{r}$.   Under certain regularity conditions summarized in the Appendix,
we have previously proved identification of the full data law if either $f(y\mid x,z,r=1)$ or $f(y\mid x,z,r=0)$ follows model \eqref{mdl:1},  even if the  missingness process is unrestricted \citep{2015arXiv150902556M}.
Aside for  Assumption 1,  model (1) includes  many commonly-used models,  for instance,  Gaussian models, and thus  essentially demonstrates that lack of identification is not an issue in many familiar situations.  
However,  one cannot understate the central role of the shadow variable for identification. Without such a variable, identification is no longer guaranteed for model \eqref{mdl:1},  even if one were to assume a parametric missingness model. For additional and extensive discussion about identification under missingness not at random with a shadow variable, see  \cite{2015arXiv150902556M} and \cite{wang2014instrumental}.

With models satisfying the corresponding identification conditions, previous authors have developed several non-doubly robust estimators.
Among them, inverse probability weighted estimation \citep{wang2014instrumental} and pseudo-likelihood  estimation  \citep{zhao2014semiparametric}  are sensitive to model misspecification; and  nonparametric estimation  \citep{d2010new} requires  an unrealistic  large sample size  for reasonable performance when  the covariate dimension is moderate to large.
In contrast, a doubly robust approach remains consistent and asymptotically normal  under partial misspecification.
Specifically,  \cite{2015arXiv150902556M} developed a doubly robust estimator  based on a three-part model for the full data: a model for the joint distribution of  the outcome and  the shadow variable in complete cases; a model for  the propensity score  evaluated at a reference value of the outcome;  and  a log odds ratio model  encoding the association of the outcome and the missingness process.  
Under correct specification of the log odds ratio model, the doubly robust estimator is consistent if either  of the other two models is correct, but not necessarily both.
However, the construction of a doubly robust estimator  is not unique.
In this paper, we develop  two alternative  doubly robust estimators of the outcome mean that  enjoy different properties, and we  compare them both in theory and via simulations reported in the Supplementary Material.

\section{Doubly robust estimators}
Under Assumption \ref{assump:anci}, we  factorize the conditional density function of $(Z,Y,R)$ given $X$ as  
\begin{eqnarray}\label{dis}
f(z,y,r\mid x)  =  c(x)\exp\{(1-r){\rm OR}(y\mid x)\}{\rm pr}(r\mid y=0,x)f(z,y\mid r=1,x), 
\end{eqnarray}
where $c(x)={\rm pr}(r=1\mid x)/{\rm pr}(r=1\mid y=0,x)$; ${\rm pr}(r=1\mid y=0,x)$ is  the response probability  evaluated at the reference level $y=0$, and is referred to as the   baseline propensity score;  $f(z,y\mid r=1,x)$ is the joint density function of $(Z,Y)$ conditional on $X$ among the complete cases, i.e., the subset with $r=1$, and is referred to as the baseline outcome density;  
\begin{eqnarray*}
{\rm OR}(y\mid x) & = & \log \frac{{\rm pr}(r=0\mid y,x) {\rm pr}(r=1 \mid y=0,x)}{{\rm pr} (r=0 \mid y=0,x) {\rm pr}(r=1 \mid y,x)},
\end{eqnarray*}
is the log of the conditional odds ratio function relating $Y$ and $R$ given $X$ with $E[\exp\{{\rm OR}(y \mid x)\} \mid r=1,x] < \infty$ and   ${\rm OR}(y=0 \mid x)=0$.
For a continuous outcome, we require that  $f(z,y \mid r=1,x)$ satisfies model \eqref{mdl:1} to guarantee  identification.
For estimation, we specify separate parametric   models  ${\rm pr}(r=1 \mid y=0,x;\alpha)$,  $f(z,y \mid r=1,x;\beta)$, and ${\rm OR}(y \mid x;\gamma)$.
We suppose throughout that ${\rm OR}(y\mid x;\gamma)$ is correctly specified, which can be achieved  by specifying a relatively flexible model, or following the approach suggested by  \cite{higgins2008imputation} if  information on the  reasons  for missingness are available.
From \eqref{dis}, we have the following identities:
\begin{align}
{\rm pr}(r=1\mid y,x)  &=&&   \frac{{\rm pr}(r=1 \mid y=0,x)}{{\rm pr}(r=1 \mid y=0,x)+\exp\{{\rm OR}(y \mid x)\}\{{\rm pr}(r=0 \mid y=0,x)\}},\label{eq:propen}\\
f(z,y \mid r=0,x)  &=&&   \frac{\exp\{{\rm OR}(y \mid x)\}}{E[\exp\{{\rm OR}(y \mid x)\} \mid r=1,x]}f(z,y \mid r=1,x),\label{eq:anci}\\
E(y \mid r=0,x) &= &&  \frac{ E[\exp\{{\rm OR}(y \mid x)\}y \mid r=1,x]}{ E[\exp\{{\rm OR}(y \mid x)\} \mid r=1,x]}. \label{eq:propen0}
\end{align}
The propensity  score, and its reciprocal, i.e., the inverse probability weight function $W(x,y;\alpha,\gamma)= 1/{\rm pr}(r=1\mid x,y;\alpha,\gamma)$, are determined by the baseline propensity score model ${\rm pr}(r=1 \mid x,y=0;\alpha)$   and the log odds ratio model  ${\rm OR}(y \mid x;\gamma)$ as in  \eqref{eq:propen}; the conditional outcome mean  among the incomplete cases  $ E(y \mid r=0,x;\beta,\gamma)$ is  determined by the baseline outcome model and the log odds ratio model as in  \eqref{eq:propen0}.

Estimation of $\beta$ only involves the complete cases. Let  $\widehat E$ denote  the empirical mean, we solve
\begin{eqnarray}
&& \widehat E  \{rS(z,y,x;\widehat \beta)\}=0, \label{eq:nui1}
\end{eqnarray}
with  score function $S(z,y,x;\beta)=\partial \log\{P(z,y \mid r=1,x;\beta)\}/\partial \beta$. 
Estimation of $\widehat\alpha$ and $\widehat\gamma$ is motivated from  a classic estimating equation following the fact that the respective weighted mean of any vector functions $G(x,y)$ and $H(x)$  among the  complete cases  equals their population mean:
$\widehat E  [ \{W(x,y;\widehat\alpha,\widehat\gamma ) r -1\} 		\{G(x,y)^T,H(x)^T\}^T ] =0$, where    $G(x,y)$ and $H(x)$ are  user-specified vector functions of dimension equal to that of  $\gamma$ and $\alpha$, respectively, and satisfy  $E[\partial W(x,y;\alpha,\gamma ) r /\partial (\alpha,\gamma)\{G(x,z)^T,H(x)^T\}]$ is nonsingular for all $(\alpha,\gamma)$. For example, if  ${\rm pr}(r=1|y,x;\alpha,\gamma)$ follows a logistic model and thus $W(x,y;\alpha,\gamma)=1+\exp\{-(1,x^T)\alpha-\gamma y\}$, we may naturally choose $G(x,y)=y$ and $H(x)=(1,x^T)^T$.  Because  $y$ is missing for $r=0$, the classic estimating equation is not feasible. However,   Assumption \ref{assump:anci} allows us to replace $y$  with  the shadow variable $z$ and  to replace
 $G(x,y)$ with $G(x,z)$.  
To further derive doubly robust estimators, we incorporate the baseline outcome model into the  estimating equation for $(\alpha,\gamma)$. Let $G_1(x,z;\beta,\gamma)=G(x,z)-E\{G(x,z)|r=0,x;\beta,\gamma\}$, we solve
\begin{eqnarray}
&&\widehat E  [ \{W(x,y;\widehat\alpha ,\widehat\gamma ) r -1\} \{G_1(x,z;\widehat\beta,\widehat\gamma)^T,H(x)^T\}^T ] =0,\label{eq:nui2}
\end{eqnarray} 
with $G(x,z)$ and $H(x)$  such that $E[\partial W(x,y;\alpha,\gamma ) r /\partial (\alpha,\gamma)\{G_1(x,z;\beta,\gamma)^T,H(x)^T\}]$ is nonsingular for all $(\alpha,\beta,\gamma)$.
The shadow variable $Z$ is used as a proxy of $Y$, thus, a choice of $Z$ that is highly correlated with $Y$ is desirable for  the purpose of efficiency maximization.

Using  $(\widehat\alpha,\widehat\beta,\widehat\gamma)$ obtained from equations \eqref{eq:nui1} and \eqref{eq:nui2}, we construct three different estimators for the outcome mean that are consistent if   either  the baseline outcome model
or the baseline propensity score model is correctly specified, together with the log odds ratio model.

A regression estimator with residual bias correction  was previously described by \cite{2015arXiv150902556M}. We use the weighted residual to correct the bias of the conditional mean among incomplete cases.
Let $M_0(x;\widehat\beta,\widehat\gamma)= E(y \mid r=0,x;\widehat \beta,\widehat\gamma )$,  the estimator is
\begin{equation*}
\widehat\mu_1 = \widehat E [ W (x,y;\widehat\alpha,\widehat\gamma) r
\{ y-M_0(x;\widehat \beta,\widehat\gamma) \}  
+M_0(x;\widehat \beta,\widehat\gamma) ].
\end{equation*}

A Horvitz--Thompson estimator with extended  weights  employs   an extended baseline propensity score model and an extended weight function.  
The extended baseline propensity score model with unknown parameter $\phi$  satisfies  ${\rm pr}_{\rm ext}(r=1 \mid y=0,x;\phi)={\rm pr}(r=1 \mid y=0,x;\widehat\alpha)$  only at   $\phi=0$. For example, we can specify 
\[{\rm pr}_{\rm ext}(r=1 \mid y=0,x;\phi)=\frac{{\rm pr}(r=1 \mid y=0,x;\widehat\alpha)}{{\rm pr}(r=1 \mid y=0,x;\widehat\alpha)+\exp\{\phi g(x)\}{\rm pr}(r=0 \mid y=0,x;\widehat\alpha)},\]
with user-specified scalar function $g(x)$.
The extended weight function   $W_{\rm ext}(x,y;\phi)$, and its reciprocal is determined    as in  \eqref{eq:propen}  with $OR(y|x)$  and ${\rm pr}(r=1 \mid y=0,x)$ replaced by $OR(y\mid x;\widehat\gamma)$ and  ${\rm pr}_{\rm ext}(r=1 \mid y=0,x;\phi)$ respectively. We estimate  $\phi$ by solving 
\begin{eqnarray}
\widehat E [\{W_{\rm ext}( x,y;\widehat\phi) r -1\} \{ 
	M_0(x;\widehat\beta,\widehat\gamma) - \widehat\mu_{\rm reg}\} ] =0, \label{eq:ht}
\end{eqnarray}
with   previously obtained $(\widehat\beta,\widehat\gamma)$ and $\widehat\mu_{\rm reg} = \widehat E  \{(1-r)M_0(x;\widehat\beta,\widehat\gamma)+ry\}$.
The Horvitz--Thompson estimator with extended  weights   is 
\begin{eqnarray*}
\widehat \mu_{2} = \widehat E  \left\{\frac{W_{\rm ext}(x,y;\widehat\phi )r}{\widehat E  \{W_{\rm ext}( x,y;\widehat\phi )r\}} y\right\}. 
\end{eqnarray*}

A regression estimator with an extended outcome model involves an extended outcome model $M_{\rm 0ext}(x;\psi)$ with parameter $\psi$ satisfying  $M_{\rm 0ext}(x;\psi)=M_{0}(x;\widehat\beta,\widehat\gamma)$   only at $\psi=0$. If $M_0(x;\widehat\beta,\widehat\gamma)=\lambda\{Q(x;\widehat\beta,\widehat\gamma)\}$ for some inverse  link  $\lambda$ and some function $Q$, we can specify $M_{\rm 0ext}(x;\psi)=\lambda\{Q(x;\widehat\beta,\widehat\gamma)+\psi q(x)\}$ with a scalar function $q(x)$. 
We estimate $\psi$   by solving 
\begin{eqnarray}
\widehat E  [ \{W( x,y;\widehat\alpha,\widehat\gamma)  -1 \} r\{y-M_{\rm 0ext}(x;\widehat\psi) \}]=0,\label{eq:rext}
\end{eqnarray}
with previously obtained $(\widehat\alpha,\widehat\gamma)$. The regression estimator  with an extended outcome model is 
\begin{eqnarray*}
\widehat \mu_3= \widehat E     \{(1-r)M_{\rm 0ext}(x;\widehat\psi)  + ry\}.
\end{eqnarray*}
The  estimators $\widehat\mu_1,\widehat\mu_2$ and $\widehat\mu_3$  may have very different characteristics, although, all three estimators are  doubly robust.
\begin{theorem}\label{thm:dr1}
Under Assumption 1, if the log odds ratio model ${\rm OR}(y \mid x;\gamma)$ is correct, and  the probability limit of   equations  \eqref{eq:nui1}, \eqref{eq:nui2},  \eqref{eq:ht} and \eqref{eq:rext} has a unique solution, then
$\widehat \mu_1$, $\widehat \mu_2$ and  $\widehat \mu_3$ are consistent if either  $f(z,y \mid r=1,x;\beta)$ or ${\rm pr}(r=1 \mid y=0,x;\alpha)$ is correctly specified.
\end{theorem}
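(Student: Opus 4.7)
The plan is to reduce consistency of all three estimators to two algebraic identities enabled by a correct odds-ratio model, and then to examine each estimator's estimating equations at the population level.

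First I would record two ingredients. When the baseline propensity is also correct, the limiting $W(X,Y;\alpha^*,\gamma^*)$ equals the true inverse probability weight, so $E\{(Wr-1)h\}=0$ for any integrable $h(X,Y)$. The second, more delicate, ingredient is the odds-ratio orthogonality implicit in (\ref{eq:anci})--(\ref{eq:propen0}): since $W-1=\exp\{{\rm OR}(Y\mid X)\}\,k(X;\alpha)$ with $k(X;\alpha)={\rm pr}(R=0\mid Y=0,X;\alpha)/{\rm pr}(R=1\mid Y=0,X;\alpha)$, one checks
\[
E\{(W-1)r\,g(X,Z,Y)\mid X\}=k(X;\alpha)\,{\rm pr}(R=1\mid X)\,E\{\exp({\rm OR})\,g\mid R=1,X\},
\]
which vanishes unconditionally whenever $g=Y-M_0(X)$ with a correct $M_0$, or $g=G(X,Z)-E\{G(X,Z)\mid R=0,X\}$ with a correct conditional mean, \emph{regardless of which baseline propensity enters $W$}. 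Both implications are immediate from (\ref{eq:propen0}) and (\ref{eq:anci}).

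I would next pin down the probability limits of $\widehat\beta,\widehat\alpha,\widehat\gamma$ using the stated uniqueness assumption. If the baseline outcome model is correct, $\widehat\beta$ is consistent by the usual maximum-likelihood argument applied to (\ref{eq:nui1}); the second identity applied to $G_1$ then shows that the true $\gamma$ solves the $G_1$-block of the population version of (\ref{eq:nui2}) for any $\alpha$, and uniqueness forces $\widehat\gamma$ to be consistent as well. If instead the baseline propensity is correct, the first identity shows that the true $(\alpha,\gamma)$ satisfies (\ref{eq:nui2}), and uniqueness yields consistency of $(\widehat\alpha,\widehat\gamma)$. In either scenario the OR is correctly evaluated at the limit.

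With these limits in hand the estimators fall into line. For $\widehat\mu_1$, iterated expectations give $\widehat\mu_1-E(Y)\to E\{(W-1)r(Y-M_0)\}$, which is killed by the first identity when the propensity is correct and by the second when $M_0$ is correct. For $\widehat\mu_3$, a correct propensity turns (\ref{eq:rext}) into $E\{(1-r)(Y-M_{0\text{ext}})\}=0$ via the first identity, whence $\widehat\mu_3\to E\{(1-r)M_{0\text{ext}}+rY\}=E(Y)$; a correct outcome model makes $\psi=0$ solve (\ref{eq:rext}) by the second identity, and uniqueness gives $\widehat\mu_3\to E\{(1-r)M_0+rY\}=E(Y)$.

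The genuinely delicate step is $\widehat\mu_2$ under a correctly specified outcome model, because neither $\widehat\phi$ nor $W_{\text{ext}}$ collapses to anything simple. There I would use $\widehat\mu_{\text{reg}}\to E(Y)$ (the regression identity under correct $M_0$) to rewrite the population version of (\ref{eq:ht}) as $E\{W_{\text{ext}}rM_0\}-E\{M_0\}=(E\{W_{\text{ext}}r\}-1)E(Y)$, then decompose $E\{W_{\text{ext}}rY\}=E\{W_{\text{ext}}r(Y-M_0)\}+E\{W_{\text{ext}}rM_0\}$, use the second identity together with the elementary consequence $E\{r(Y-M_0)\}=E(Y)-E\{M_0\}$ to evaluate the first summand, and substitute to obtain $E\{W_{\text{ext}}rY\}=E\{W_{\text{ext}}r\}\,E(Y)$, hence $\widehat\mu_2\to E(Y)$. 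The correct-propensity case is immediate: $\phi=0$ solves (\ref{eq:ht}) by the first identity, and $\widehat\mu_2$ reduces to the usual Horvitz--Thompson estimator. This $\widehat\mu_2$ bookkeeping under a correct outcome model is the main obstacle; everything else is a routine application of the two identities.
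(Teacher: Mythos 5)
Your two ``ingredients'' are exactly parts (i) and (ii) of the paper's Lemma~A1 (its part (iii) is just their sum after writing $Wr-1=(W-1)r+(r-1)$), and your treatment of the nuisance limits and of $\widehat\mu_2,\widehat\mu_3$ follows the same route as the Appendix proof; your $\widehat\mu_2$ bookkeeping under a correct outcome model is an equivalent rearrangement of the paper's four-term decomposition of $\widehat\mu_2$ around $\widehat\mu_{\rm reg}$, so nothing new is gained or lost there. One point needs fixing, because it is precisely where the distinction between $(Wr-1)$ and $(W-1)r$ matters. Iterated expectations give
\begin{equation*}
\widehat\mu_1-E(Y)\;\longrightarrow\;E[\{W(x,y;\alpha^*,\gamma^*)r-1\}\{Y-M_0(x;\beta^*,\gamma^*)\}],
\end{equation*}
not $E[\{W-1\}r\{Y-M_0\}]$ as you wrote; the two differ by $E[(1-r)\{Y-M_0\}]$, which vanishes only when $M_0$ is correct. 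As written, your claim that the first identity kills the bias when the propensity is correct fails, since that identity concerns $Wr-1$ and $E[\{W-1\}r\,h]$ is generally nonzero for a correct propensity when $h=Y-M_0$ with $M_0$ misspecified. With the corrected expression the argument is immediate: identity one kills $E[(Wr-1)(Y-M_0)]$ under a correct propensity, and under a correct outcome model you split off $(W-1)r$ (killed by identity two) and $(r-1)$ (killed by $E\{Y-M_0\mid r=0,x\}=0$). Two smaller points to tighten: your first identity must be stated for functions $h(X,Y,Z)$, not just $h(X,Y)$ --- this is what Assumption~1(ii) buys you and is needed for the $G_1$-block of \eqref{eq:nui2}; and in the $G_1$-block argument for consistency of $\widehat\gamma$ under a correct outcome model you should note explicitly that the residual term $E[(r-1)G_1]$ vanishes because $E\{G_1\mid r=0,x\}=0$.
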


The  extended models not only provide  double robustness, but also provide  a  strategy  to check if the working models are correct.
We prove in the Appendix that if the baseline propensity score model is correct, $\widehat\phi$ converges to $0$ in probability; and if the baseline outcome model is correct, $\widehat\psi$ converges to $0$ in probability.
Therefore, one may use this property to assess whether the working models are correctly specified by checking  whether $\widehat\phi$ and $\widehat\psi$ are within sampling variability of zero, respectively.  
However, one should acknowledge that  the space of possible departures from the assumed model may be prohibitively large relative to the proposed test
so that the resulting goodness-of-fit test will generally have good power against certain alternatives but not in all possible directions away from the specified working model. We explore the power of the proposed goodness-of-fit test via a simulation study  in the Supplementary Material.

All three doubly robust estimators rely on a correct log odds ratio model, since inference about the law of $Y$  requires an accurate evaluation  of the dependence  between the missingness process and the outcome, which is captured by the log odds ratio model ${\rm OR}(y \mid x;\gamma)$.
To the best of our knowledge, with the exception of \cite{2015arXiv150902556M},  previous doubly robust estimators  have assumed that this log odds ratio is known, either to equal the null value of $0$ under missingness at random \citep{bang2005doubly,tsiatis2007semiparametric,van2003unified}, or to be of a known functional form with  no unknown parameters \citep{vansteelandt2007estimation,robins2008higher}. We have relaxed these  more stringent assumptions.

\section{Relation to previous doubly robust estimators and  comparisons}
Previous doubly robust estimators under missingness at random can be viewed as special cases of our estimators.
Under missingness at random,   ${\rm OR}(y \mid x)=0$, ${\rm pr}(r=1 \mid x,y=0)={\rm pr}(r=1 \mid x)$, the inverse probability weight function $W(x;\alpha)=1/{\rm pr}(r=1 \mid x;\alpha)$ does not vary with $y$,  and the conditional mean among the population  $M(x;\beta)$ equals  that among the incomplete cases $M_0(x;\beta,\gamma)$.
The estimator  $\widehat\mu_1'=\widehat E  [W(x;\widehat\alpha)r\{y-M(x;\widehat\beta)\}+ M(x;\widehat\beta)]$ of \cite{kang2007demystifying} is a special case of the regression estimator with  residual bias correction;
the estimator $\widehat \mu_2' = \widehat E  [W_{\rm ext}(x;\widehat\phi )r/\widehat E  \{W_{\rm ext}( x;\widehat\phi )r\} y]$  proposed by \cite{robins2007comment},  with  an extended logistic propensity score model  $\logit {\rm pr}_{\rm ext}(r=1 \mid x;\phi)=(1,x^T)\widehat\alpha+\phi g(x)$,
is a special case of  the Horvitz--Thompson estimator with extended  weights;
the estimator $\widehat\mu_3' =\widehat E \{M_{\rm ext}(x;\widehat\psi)\}$  proposed by \cite{robins2007comment}, with an extended outcome model $M_{\rm ext}(x;\widehat\psi)$ satisfying  $\widehat E [W( x;\widehat\alpha)r\{y-M_{\rm ext}(x;\widehat\psi)\}]=0$ and
$\widehat E [r\{y-M_{\rm ext}(x;\widehat\psi)\}]=0$, is a special case of the regression estimator with an extended outcome model.

The three proposed doubly robust estimators  enjoy some of the properties of their  missingness at random analogs.
The  estimator $\widehat\mu_2$  is  a convex combination of the observed outcome values.
It satisfies the  boundedness  property \citep{robins2007comment} that the estimator falls in the  parameter space for the outcome mean almost surely.
Such estimators are preferred when the inverse probability weights are highly variable, because they  rule out estimates outside the sample space. 
Boundedness is not guaranteed for  $\widehat\mu_1$.
If the range of $M_{\rm 0ext}(x;\psi)$  is contained in the sample space of the outcome, $\widehat\mu_3$ also satisfies the boundedness condition, but this does not hold in general. For example, if the outcome is continuous, and  $M_{\rm 0ext}(x;\psi)=M_0(x;\widehat\beta,\widehat\gamma)+\psi$, the range of $\widehat\mu_3$   may be outside the  sample space of the outcome mean.

The three proposed estimators offer certain improvements  in term of  bias when both models are misspecified.
The asymptotic  bias of $\widehat\mu_1$ can be written as
\begin{equation*}
{\rm Bias}_1= E[\{W(x,y;\alpha^*,\gamma^*)r-1\}\{y-M_0(x;\beta^*,\gamma^*)\}],
\end{equation*}
and the asymptotic bias  of $\widehat\mu_3$  has the same form with  $M_0(x;\beta,\gamma^*)$ replaced by $M_{\rm 0ext}(x;\psi^*)$, with probability limits   $(\alpha^*,\beta^*,\gamma^*,\psi^*)$   of the corresponding estimators.
The bias  is driven by the degree of  misspecification of both the weight function and the conditional mean  among the incomplete cases.
As pointed out by \cite{robins2007comment} and \cite{vermeulen2014biased}, without further restrictions on the inverse probability weights, ${\rm Bias}_1$  gets inflated in  regions with  large weights.
However, if the components of $H(x)$ in equation \eqref{eq:nui2} include a constant function, then $E \{W(x,y;\alpha^*,\gamma^*)r\}=1$, which restricts the amount of  variability of the inverse probability weights. Thus, ${\rm Bias}_1$  does not explode with large weights.

In simulation studies, we found that the three  doubly robust estimators approximate the true outcome mean if either of  the baseline models is correct, but they are biased if neither baseline model is correct. 
For the case with moderately variable weights, the relative magnitude of the bias depends on the specific data generating process, but for the case with highly-variable weights,  the Horvitz--Thompson estimator with extended  weights has smaller bias.
If the baseline outcome model is correct, the parameter of the extended  outcome model, $\widehat\psi$  is close to $0$; and if  the baseline propensity score model is correct, the parameter of the extended  weight model, $\widehat\phi$  is close to $0$.
We also perform formal tests of the   null hypotheses  $\mathbb H_0:\ \phi=0$ and $\mathbb H_0:\ \psi=0$ respectively under level $0.05$. The results show an empirical type I error approximating  $0.05$  if the required  baseline propensity score model or   baseline outcome model  is correct, respectively (i.e., the true value of $\phi$ and $\psi$ equals $0$ respectively). Such tests   have good power   in moderate  samples if  the required  model is incorrect, respectively.
We recommend  the proposed  hypothesis tests to  check for severe  misspecification of the baseline  models  in practice.

\section{Discussion}
Extensions of the doubly robust methods described in this work   to other functionals, such as a  parameter $\delta$  solving a full data estimating equation $ E\{U(z,y,x;\delta)\}=0$, can be achieved by replacing $Y$ with $U$ wherever $Y$ occurs in the  estimating equations and solving the doubly robust estimating equation for the parameter of interest.
The  methods also have potential application in  related areas, such as  longitudinal data analysis and causal inference.

\section*{Acknowledgement}
The work is partially supported by the China Scholarship Council and the National Institute of Health.
The authors are grateful to the referees and the editor for their helpful comments.
\section*{Supplementary material}
Supplementary material available at \Bka\  online includes the proof of a lemma, a counterexample to identification with a continuous outcome, a graph model for the shadow variable, and  simulation studies.

\appendix

\section*{Appendix}
\subsection*{Proof of Theorem \ref{thm:dr1}}
We need the following lemma, which we prove in the Supplementary Material.
\begin{lemma}\label{lem:1}
Under Assumption 1, suppose that the log odds ratio model is correct, and that the probability limit of equations \eqref{eq:nui1} and \eqref{eq:nui2}  has a unique solution. For  any square integrable vector function $D(z,y,x)$,  scalar function $V(x)$,  and  $(\widehat\alpha,\widehat\beta,\widehat\gamma)$ solving equations \eqref{eq:nui1} and \eqref{eq:nui2},
\begin{enumerate}
\item[(i)] if  ${\rm pr}(r=1 \mid y=0,x;\alpha)$  is correct,  then $\widehat E  [\{W( x,y;\widehat\alpha,\widehat\gamma) r -1\}D(z,y,x)]$ converges to $0$ in probability;
\item[(ii)] if $f(z,y \mid r=1,x;\beta)$ is correct, then
$ \widehat E [r\exp\{{\rm OR}(y \mid x;\widehat\gamma)\} V(x) \{D(z,y,x)-E[D(z,y,x) \mid r=0,x;\widehat\beta,\widehat\gamma]\}]$ converges to $0$ in probability;
\item[(iii)] if either of the baseline models is correct, then
$\widehat E  [\{W(x,y;\widehat\alpha,\widehat\gamma)r-1\}\{D(z,y,x)-E [D(z,y,x) \mid r=0,x;\widehat\beta,\widehat\gamma]\}]$ converges to $0$ in probability.

\end{enumerate}
\end{lemma}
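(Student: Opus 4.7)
The plan is to treat the three parts in order, using the uniqueness hypothesis plus the appropriate correctly-specified piece to pin down the probability limits of $(\widehat\alpha,\widehat\beta,\widehat\gamma)$, and then computing the target population expectation directly via identities \eqref{eq:propen} and \eqref{eq:anci}. A standing preliminary is that $\widehat\beta\to\beta_0$ (the true value) whenever the baseline outcome model is correctly specified, since \eqref{eq:nui1} is the score of the restricted likelihood among the complete cases; more generally $\widehat\beta$ converges to some pseudo-true $\beta_*$ by standard M-estimator arguments.

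For part (i), by \eqref{eq:propen} the correctness of both the baseline propensity and the log odds ratio model gives $W(\alpha_0,\gamma_0)=1/{\rm pr}(r=1\mid Y,X)$, and combined with the shadow condition ${\rm pr}(r=1\mid Y,X,Z)={\rm pr}(r=1\mid Y,X)$ this yields $E[W(\alpha_0,\gamma_0)r\,\Phi(Z,Y,X)]=E[\Phi]$ for every integrable $\Phi$. Applying this with $\Phi=\{G_1(\beta_*,\gamma_0)^T,H(X)^T\}^T$ shows that $(\alpha_0,\gamma_0)$ solves the population analogue of \eqref{eq:nui2} regardless of $\beta_*$, so by uniqueness $\widehat\alpha\to\alpha_0$ and $\widehat\gamma\to\gamma_0$. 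The same identity with $\Phi=D$ gives $E[\{W(\alpha_0,\gamma_0)r-1\}D]=0$, and a standard uniform-convergence argument completes the part.

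For part (ii), the subtle step is to show $\widehat\gamma\to\gamma_0$ without assuming $\widehat\alpha$ consistent. Using the decomposition $W(\alpha,\gamma_0)=1+V_\alpha(X)\exp\{{\rm OR}(Y\mid X;\gamma_0)\}$ read off from \eqref{eq:propen}, with $V_\alpha(X)={\rm pr}(r=0\mid y=0,X;\alpha)/{\rm pr}(r=1\mid y=0,X;\alpha)$, the $G_1$-component of the population \eqref{eq:nui2} at $(\alpha,\beta_0,\gamma_0)$ reduces, after cancelling $E[(1-r)\{G-E(G\mid r=0,X)\}]=0$ via iterated expectations, to
\[E\bigl[V_\alpha(X)\,{\rm pr}(r=1\mid X)\bigl\{E[\exp\{{\rm OR}\}G\mid r=1,X]-E[\exp\{{\rm OR}\}\mid r=1,X]E(G\mid r=0,X)\bigr\}\bigr],\]
which vanishes by identity \eqref{eq:anci}. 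Hence the $G_1$-component is satisfied at $\gamma_0$ for every $\alpha$, while the $H$-component determines a pseudo-true $\alpha_*$; uniqueness then forces $\widehat\gamma\to\gamma_0$. The target population expectation is handled by the same manipulation: identity \eqref{eq:anci} yields
\[E\bigl[r\exp\{{\rm OR}\}V(X)\{D-E(D\mid r=0,X)\}\bigr]=E\bigl[V(X){\rm pr}(r=1\mid X)\bigl\{E[\exp\{{\rm OR}\}D\mid r=1,X]-E[\exp\{{\rm OR}\}\mid r=1,X]E(D\mid r=0,X)\bigr\}\bigr]=0,\]
and the law of large numbers plus continuity finish the argument.

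For part (iii), decompose
\[\widehat E[\{Wr-1\}\{D-\widetilde E(X)\}]=\widehat E[\{Wr-1\}D]-\widehat E[\{Wr-1\}\widetilde E(X)],\qquad \widetilde E(X)=E[D\mid r=0,X;\widehat\beta,\widehat\gamma].\]
If $\alpha$ is correct, part (i) applied with $\Phi=D$ and with $\Phi=\widetilde E(X)$ sends each piece to zero. If $\beta$ is correct but $\alpha$ possibly not, substitute $W=1+V_{\widehat\alpha}(X)\exp\{{\rm OR}\}$ to split the target into $\widehat E[(r-1)\{D-\widetilde E(X)\}]$, whose probability limit $E[(r-1)\{D-E(D\mid r=0,X)\}]$ equals zero by iterated expectations via $E[(1-r)D]=E[{\rm pr}(r=0\mid X)E(D\mid r=0,X)]$, and $\widehat E[r\exp\{{\rm OR}\}V_{\widehat\alpha}(X)\{D-\widetilde E(X)\}]$, which vanishes by part (ii) with the scalar $V$ there taken to be $V_{\widehat\alpha}$. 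The main obstacle I anticipate is the $\widehat\gamma$-consistency step in part (ii): since $\widehat\gamma$ is estimated jointly with a possibly misspecified $\widehat\alpha$ via \eqref{eq:nui2}, one must verify that the $G_1$-component of the population equation holds identically in $\alpha$ at $\gamma=\gamma_0$, which is precisely what allows the uniqueness hypothesis to isolate $\gamma_0$ through the $H$-component alone; identity \eqref{eq:anci} is the essential algebraic tool throughout.
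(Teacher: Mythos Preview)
Your argument is essentially correct and follows the natural route one would expect for this lemma; the paper itself defers the proof to its Supplementary Material, so a line-by-line comparison is not possible from what is given here, but your decomposition $W=1+V_\alpha(X)\exp\{{\rm OR}\}$ from \eqref{eq:propen} and the use of identity \eqref{eq:anci} to annihilate the complete-case tilted expectation are exactly the algebraic moves the paper exploits in its proof of Theorem~\ref{thm:dr1}, so your approach is aligned with the paper's.

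One point deserves tightening. In part~(ii) you write that ``the $H$-component determines a pseudo-true $\alpha_*$'' and then invoke uniqueness to conclude $\widehat\gamma\to\gamma_0$. But the hypothesis only guarantees that the \emph{joint} population system has a unique solution; it does not by itself guarantee that the $H$-block $E[\{W(x,y;\alpha,\gamma_0)r-1\}H(x)]=0$ admits a solution in $\alpha$ at $\gamma=\gamma_0$ when the baseline propensity model is misspecified. What you have shown is that every point of the form $(\alpha,\beta_0,\gamma_0)$ satisfies the $G_1$-block; to conclude $\gamma^*=\gamma_0$ you still need that at least one such point also satisfies the $H$-block, so that uniqueness forces the limit there. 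This is a mild regularity condition that is typically assumed in this literature (and is implicit in the paper's standing assumption that the limiting equations have a unique solution under either working model), but you should state it explicitly rather than asserting that $\alpha_*$ is ``determined''. Once this is made explicit, the remainder of your argument for parts~(ii) and~(iii), including the iterated-expectation step $E[(1-r)\{D-E(D\mid r=0,X)\}]=0$ and the application of part~(ii) with $V=V_{\alpha_*}$, goes through cleanly.
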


\begin{proof}[of Theorem~\ref{thm:dr1}]
Suppose that the log odds ratio model is correctly specified,  and  that the probability limit of the estimating equations has a unique solution.
\begin{enumerate}
\item Double robustness of $\widehat\mu_1$. If either of the baseline models is correct, from (iii) of Lemma \ref{lem:1}, $\widehat E  [\{W(x,y;\widehat\alpha,\widehat\gamma)r-1\}\{y- E (y \mid r=0,x;\widehat\beta,\widehat\gamma)\}]$ converges to $0$, therefore $\widehat E  [ W (x,y;\widehat\alpha,\widehat\gamma) r \{ y-M_0(x;\widehat \beta,\widehat\gamma) \}   +M_0(x;\widehat \beta,\widehat\gamma) ]$   converges  to the true outcome mean.

\item  Double robustness of $\widehat\mu_2$. From (i) of Lemma \ref{lem:1}, if the baseline propensity score model is correct,  $\widehat E  [\{W_{\rm ext}( x,y;\phi=0) r -1\}\{M_0(x;\widehat\beta,\widehat\gamma)-\widehat\mu_{\rm reg}\}] =  \widehat E  [\{W( x,y;\widehat\alpha, \widehat\gamma) r -1\}\{M_0(x;\widehat\beta,\widehat\gamma)-\widehat\mu_{\rm reg}\}]$ converges to $0$, i.e., $\phi=0$ is a solution of the probability limit of equation \eqref{eq:ht}. Thus, the solution of equation \eqref{eq:ht} $\widehat\phi$ converges to $0$,
and $\lim_{n\rightarrow +\infty} \widehat E \{W_{\rm ext}( x,y;\widehat\phi)r \}=1$, 
$\lim_{n\rightarrow +\infty} \widehat E \{W_{\rm ext}( x,y;\widehat\phi)r y\}=\lim_{n\rightarrow +\infty} \widehat E \{W( x,y;\widehat\alpha, \widehat\gamma)r y\}= E(Y)$.  
If the baseline outcome  model is correct, $\widehat E [(1-r)\{y-M_0(x;\widehat\beta,\widehat\gamma)\}]$ converges to 0; $\widehat\mu_{\rm reg} = \widehat E  [(1-r)M_0(x;\widehat\beta,\widehat\gamma)+ry]$  converges to the true outcome mean; and  $\widehat E (y-\widehat\mu_{\rm reg})$ converges to $0$.  By definition of the extended weight function, $\{W_{\rm ext} (x,y;\widehat\phi) -1 \}r = r\exp\{{\rm OR}(y \mid x;\widehat\gamma)\}V(x)$ with $V(x)={\rm pr}_{\rm ext}(r=0 \mid y=0,x;\widehat\phi)/{\rm pr}_{\rm ext}(r=1 \mid y=0,x;\widehat\phi)$. From (ii) of Lemma \ref{lem:1},  $\widehat E  [\{W_{\rm ext}(x,y;\widehat\phi )  -1\}r\{y-M_0(x;\widehat \beta,\widehat \gamma)\}]$ converges to $0$.  Thus,  $\widehat E  [\{W_{\rm ext}(x,y;\widehat\phi )r  -1\}\{y-M_0(x;\widehat \beta,\widehat \gamma)\}]$ converges to $0$, and
\begin{eqnarray*} 
\widehat\mu_2
	& = & 1/\widehat E \{W_{\rm ext}(x,y;\widehat\phi)r\} \cdot \widehat E  [\{W_{\rm ext}(x,y;\widehat\phi ) r -1\}\{y-M_0(x;\widehat \beta,\widehat \gamma)\}] \\
	&   & +1/\widehat E \{W_{\rm ext}(x,y;\widehat\phi)r\} \cdot \widehat E  [\{W_{\rm ext}( x,y;\widehat\phi) r -1\} \{ 
		M_0(x;\widehat\beta,\widehat\gamma) - \widehat\mu_{\rm reg}\} ] \\
	&   & + 1/\widehat E \{W_{\rm ext}(x,y;\widehat\phi)r\} \cdot \widehat E (y-\widehat\mu_{\rm reg}) + \widehat\mu_{\rm reg}
\end{eqnarray*} 
converges to the true outcome mean in probability.

\item Double robustness of $\widehat\mu_3$.
If ${\rm pr}(r=1 \mid x,y=0;\alpha)$ is correct, from (i) of Lemma \ref{lem:1}, $\widehat E  [\{W(x,y;\widehat\alpha,\widehat\gamma)r-1\}\{y-M_{\rm 0ext}(x;\widehat\psi)\}]$ converges to $0$. Note equation \eqref{eq:rext},  we have that $\widehat E  [(1-r)\{y-M_{\rm 0ext}(x;\widehat\psi)\}]$ converges to $0$. Thus, $\widehat\mu_3=\widehat E     \{(1-r)M_{\rm 0ext}(x;\widehat\psi)  + ry\}$ converges to the true outcome mean.
 If $f(z,y \mid r=1,x;\beta)$ is correct, then  $\widehat E [(1-r)\{y-M_0(x;\widehat\beta,\widehat\gamma)\}]$ converges to $0$.  Since
$\{W (x,y;\widehat\alpha,\widehat\gamma) -1 \}r = r\exp\{{\rm OR}(y \mid x;\widehat\gamma)\}V(x)$ with $V(x)={\rm pr}(r=0 \mid y=0,x;\widehat\alpha)/{\rm pr}(r=1|y=0,x;\widehat\alpha)$,
from (ii) of Lemma \ref{lem:1},
$\widehat E  [\{W(x,y;\widehat\alpha,\widehat\gamma) - 1\}r\{ y - M_{\rm 0ext}(x;\psi=0)\} ] = \widehat E  [\{W( x,y;\widehat\alpha,\widehat\gamma) - 1\}r\{ y - M_0(x;\widehat\beta,\widehat\gamma)\} ]$  converges to $0$.  That is,  $\psi=0$ is a solution of the probability limit of equation \eqref{eq:rext}. Thus, the solution of equation \eqref{eq:rext}, $\widehat\psi$ converges to $0$, and  $\lim_{n\rightarrow +\infty} \widehat E     \{(1-r)M_{\rm 0ext}(x;\widehat\psi)  + ry\} =\lim_{n\rightarrow +\infty} \widehat E     \{(1-r)M_0(x;\widehat\beta,\widehat\gamma)  + ry\} =  E(Y)$.

\end{enumerate}
\end{proof}

\subsection*{Regularity conditions for   model (1)}
The full data law is identifiable if either $f(y|z,x,r=1)$ or $f(y|z,x,r=0)$ follows the location-scale  model (1), and the corresponding density function $f_{r=1}$ or $f_{r=0}$ satisfies the following  conditions:
\begin{enumerate}
\item[(a)] the characteristic function $\varphi(t)$ of the density function $f(v)$ satisfies $0<|\varphi(t)|<C\exp(-\delta | t|)$ for $t\in \mathbb{R}$ and some constants $C,
\delta>0$;

\item[(b)] conditional on $x$, $\mu(z,x)$, $\sigma(z,x)$ are continuously differentiable and integrable with respect to $z$;  $f(v)$   is continuously differentiable, and $\int_{-\infty}^{+\infty}|v \cdot \partial f(v)/\partial v|^2 dv$ is finite;

\item[(c)]  there exist some linear   one-to-one mapping $M: f\{(v-a)/b\}\longmapsto h(t,a,b)$ and  some value $-\infty\leq t_0\leq +\infty$ such that
$\lim_{t\rightarrow t_0}h(t,a,b)/h(t,a',b')$ either equals zero or infinity  for any $a,a'\in \mathbb R$, $b,b'>0$ with $(a,b)\neq(a',b')$. 
\end{enumerate}
Many commonly-used models satisfy  conditions (a)-(c), for example,    the Gaussian models with $f$  the standard normal density function,
$M$  the inverse Laplace transform,   $h(t,a,b)$  the moment-generating function of a normal density function with  mean $a$ and variance $b^2$, and $t_0=+\infty$.

\bibliographystyle{biometrika}
\bibliography{CausalMissing}

\end{document}